\title{Derivatives of Multilinear Functions of Matrices}
\author{Priyanka Grover}
\institute{{Theoretical Statistics and Mathematics Unit,}\\\small{Indian Statistical Institute, Delhi Centre,}\\ \small{7, S.J.S. Sansanwal Marg,}\\ \small{New Delhi-110016, India}\\
\email{pgrover8r@isid.ac.in}}
\newcommand{\X}{\mathbb X}
\newcommand{\Y}{\mathbb Y}
\newcommand{\N}{\mathbb N}
\newcommand{\mat}{\mathbb{M}(n)}
\newcommand{\matk}{\mathbb{M}(\binom{n}{k})}
\newcommand{\C}{\mathbb{C}}
\newcommand{\I}{\mathcal{I}}
\newcommand{\J}{\mathcal{J}}
\newcommand{\tr}{\mathop{{\rm tr}}}
\newcommand{\padj}{\mathop{{\rm padj}}}
\newcommand{\adj}{\mathop{{\rm adj}}}
\newcommand{\sgn}{\mathop{{\rm sgn}}}
\newcommand{\per}{\mathop{{\rm per}}}
\newtheorem{thm}{Theorem}
\newtheorem{cor}[thm]{Corollary}
\newcommand{\Hil}{\mathcal{H}}
\newcommand{\lo}{\mathscr{L}}
\newcommand{\De}{{\rm D}}
\begin{document}
\index{Grover, Priyanka}
\maketitle

Perturbation or error bounds of functions have been of great interest for a long time. If the functions are differentiable, then the mean value theorem and Taylor's theorem come handy for this purpose. While the former is useful in estimating $\|f(A+X)-f(A)\|$ in terms of $\|X\|$ and requires the norms of the first derivative of the function, the latter is useful in computing higher order perturbation bounds and needs norms of the higher order derivatives of the function.\\

In the study of matrices, determinant is an important function. Other scalar valued functions like eigenvalues and coefficients of characteristic polynomial are also well studied. Another interesting function of this category is the permanent, which is an analogue of the determinant in matrix theory. More generally, there are operator valued functions like tensor powers, antisymmetric tensor powers and symmetric tensor powers which have gained importance in the past. In this article, we give a survey of the recent work on the higher order derivatives of these functions and their norms. Using Taylor's theorem, higher order perturbation bounds are obtained. Some of these results are very recent and their detailed proofs will appear elsewhere.

\numberwithin{thm}{section}
\numberwithin{equation}{section}

\section{Introduction}
Let $\X$ and $\Y$ be two Banach spaces. Let $f : \X \rightarrow \Y$ be a continuously differentiable map. The derivative of $f$ at a point $a \in \X$ is the linear map $\De f(a):\X \rightarrow \Y$ whose action at $x \in \X$ is given by :--
\begin{equation}
\De f{(a)}(x)=\left.\frac{d}{dt}\right|_{t=0} f{(a+tx)}. \label{3} 
\end{equation}
Let $\lo (\mathbb X;\mathbb Y)$ denote the Banach space of all bounded linear operators from $\X$ into $\Y$. The map $\De f$ is a continuous map of $\X$ into $\lo (\X;\Y)$. If $\De f$ is differentiable at $a \in \X$, then $f$ is said to be twice differentiable at $a$, and the $\emph{second derivative}$ of $f$ at $a$, denoted by $\De^2 f{(a)}$, is the derivative of $\De f$ at $a$. This is an element of $\lo(\X;(\lo(\X;\Y))$ which is identified with $\lo_2(\X;\Y)$, the space of continuous bilinear mappings of $\X \times \X$ into $\Y$. Similarly, for any $m$, if $\De ^{m-1} f$ is differentiable at $a\in \X$, then $f$ is said to be $m$-times differentiable at $a$. The $m^{th}$ derivative of $f$ at $a$, denoted by $\De^m f{(a)}$, is an element of $\mathscr{L}_m(\X;\Y)$, the space of continuous multilinear mappings of $\X \times \cdots \times \X$ into $\Y$ (See \cite{dieudonne}). If $f$ is $m$-times differentiable, then for $x^1,\ldots,x^m \in \X$
\begin{equation}
\De^m f{(a)(x^1,\ldots,x^m)}=\left.\frac{\partial^m}{\partial t_1 \cdots  \partial t_m} \right|_{t_1=\cdots=t_m=0} f{(a+t_1 x^1+\cdots+t_m x^m)}.\label{derivativedefn}
\end{equation}

The norm of a linear operator $T$ is defined as
$$\|T\|=\sup_{\|x\|=1}\|T(x)\|.$$
It follows that
\begin{equation}
\|\De^m f(a)\|=\sup_{\|x^1\|=\cdots=\|x^m\|=1} \|\De ^m f(a)(x^1,\ldots,x^m)\|.\label{normhighder}
\end{equation}
Taylor's theorem says that if $f$ is a (p+1)-times differentiable function, then for all $a\in \X$ and for small $x\in \X$
\begin{equation}
\|f(a+x)-f(a)\|\leq \sum_{m=1}^{p} \frac{1}{m!}\|\De^m f(a)\| \|x\|^m+O(\|x\|^{p+1}).\label{taylor}
\end{equation}
In order to find higher order perturbation bounds, one needs to know the norms of $\De^m f(a)$ or upper bounds on them, for all $m$. In this article we discuss these for some important multilinear functions of matrices. 

Let $\Hil$ be an $n$-dimensional complex Hilbert space with the usual Euclidean norm $\|\cdot\|$. We identify $\Hil$ with $\C^n$ and the set $\lo(\Hil,\Hil)$ with the set $\mat$ of all $n\times n$ complex matrices. For $A\in \mat$ the operator norm of $A$ is defined as
$$\|A\|=\sup_{x\in \C^n, \ \|x\|=1}\|Ax\|.$$ Let $s_1(A)\geq \cdots\geq s_n(A)\geq 0$ be the singular values of $A$. Then
$$\|A\|=s_1(A).$$

Let $f:\mathbb M(n_1)\rightarrow \mathbb M(n_2)$ be an $m$-times differentiable map. Then the norm of $\De^m f(A)$ is given by
\begin{equation}
\|\De^m f(A)\|=\sup_{\|X^1\|=\cdots=\|X^m\|=1} \|\De^m f(A)(X^1,\ldots,X^m)\|.
\end{equation}

Let $\otimes ^k \Hil$ denote the $k$-fold tensor power of $\Hil$. It is a Hilbert space of dimension $n^k$ (See \cite[Chapter 1]{R.bhatia3}). If $\{e_i\}$, $1\leq i\leq n$, is an orthonormal basis of $\Hil$, then $\{e_{i_1}\otimes\cdots\otimes e_{i_k}:1\leq i_1,\dots,i_k\leq n\}$ forms a basis for $\otimes ^k \Hil$. We order this basis lexicographically. Let $\langle \cdot, \cdot \rangle $ denote the inner product on $\Hil$. Then the inner product in $\otimes ^k \Hil$ is defined by
$$\langle x_1\otimes\cdots\otimes x_k,y_1\otimes\cdots\otimes y_k\rangle=\prod_{i=1}^k \langle x_i,y_i\rangle.$$ 
The \emph{tensor power} of $A$, denoted by $\otimes^k A$, is a map from the space $\mat$ to $\mathbb M(n^k)$. It is defined on elementary tensors by
$$(\otimes^k A)(x_1\otimes\cdots\otimes x_k)=Ax_1\otimes \cdots\otimes Ax_k,$$ and then extended linearly to all of $\otimes^k \Hil$.

Two important subspaces of $\otimes ^k \Hil$ are the \emph{antisymmetric tensor power} and the \emph{symmetric tensor power} of $\Hil$. The antisymmetric tensor product of vectors $x_1,\ldots,x_k$ in $\Hil$ is defined as 
$$x_1\wedge\cdots\wedge x_k=\frac{1}{(k!)^{1/2}} \sum_{\sigma \in S_k} \sgn(\sigma) x_{\sigma(1)}\otimes \cdots \otimes x_{\sigma(k)},$$
where $S_k$ denotes the set of all permutations of $\{1,2,\ldots,k\}$ and $\sgn(\sigma)=\pm 1$, depending on whether $\sigma$ is an even or odd permutation. If $x_j$ are orthonormal, then $x_1\wedge\cdots\wedge x_k$ is a unit vector. Note that
$$x_1\wedge \cdots \wedge x_i \wedge \cdots \wedge x_j\wedge \cdots\wedge x_k=-x_1\wedge \cdots \wedge x_j \wedge \cdots\wedge x_i\wedge \cdots\wedge x_k.$$ In particular, $x_1\wedge \cdots\wedge x_k=0$ if $x_i=x_j$ for some $i\neq j$. The span of all antisymmetric tensors $x_1\wedge \cdots \wedge x_k$ in $\otimes^k \Hil$ is called the \emph{antisymmetric tensor power} of $\Hil$. It is denoted by $\wedge ^k \Hil$. For $k>n$ the space $\wedge^k \Hil=\{0\}$. Let
$Q_{k,n}=\{(i_1,\ldots,i_k) |\ i_1,\ldots,i_k\in \N, \ 1\leq i_1<\cdots<i_k\leq n\}.$ For $k>n,\, Q_{k,n}=\varnothing$, by convention.
If $\{e_i\}$, $1\leq i\leq n$, is an orthonormal basis of $\Hil$, then for $\alpha=(\alpha_1,\ldots,\alpha_k) \in Q_{k,n}$ we define $$e^{(\alpha)}=e_{\alpha_1}\wedge \cdots \wedge e_{\alpha_k}.$$ Then $\{ e^{(\alpha)}: \alpha \in Q_{k,n}\}$ forms an orthonormal basis of $\wedge^k \Hil$. The restriction of $\otimes^k A$ to this subspace is denoted by $\wedge^k A$ and is called the $k$th \emph{antisymmetric tensor power} of $A$.  Given two elements $\alpha$ and $\beta$ of $Q_{k,n}$, let $A[\alpha|\beta]$ denote the $k \times k$ matrix obtained from $A$ by picking its entries from the rows corresponding to $\alpha$ and the columns corresponding to $\beta$. With respect to the above mentioned basis, the $(\alpha,\beta)$-entry of $\wedge^k A$ is $\det{A[\alpha|\beta]}$. 

The symmetric tensor product of vectors $x_1,\ldots,x_k$ in $\Hil$ is defined as 
$$x_1\vee\cdots\vee x_k=\frac{1}{(k!)^{1/2}} \sum_{\sigma \in S_k} x_{\sigma(1)}\otimes \cdots \otimes x_{\sigma(k)}.$$ If $x_j$ are orthonormal, then $x_1\vee\cdots\vee x_k$ is a unit vector. The span of all these vectors in $\otimes^k \Hil$ is denoted by $\vee ^k \Hil$. It is called the \emph{symmetric tensor power} of $\Hil$. Let
$G_{k,n}=\{(i_1,\ldots,i_k)|\ i_1,\ldots,i_k\in \N, 1\leq i_1\leq\cdots\leq i_k\leq n\}.$ 
Note here that for $k\leq n,\ Q_{k,n}$ is a subset of $G_{k,n}$. Given an orthonormal basis $\{e_i\}$, $1\leq i\leq n$, of $\Hil$ define, for $\alpha=(\alpha_1,\ldots,\alpha_k) \in G_{k,n}$ $$e_{(\alpha)}=e_{\alpha_1}\vee \cdots \vee e_{\alpha_k}.$$ If $\alpha$ consists of $\ell$ distinct indices $\alpha_1,\ldots,\alpha_{\ell}$ with multiplicities $m_1,\ldots,m_{\ell}$ respectively, put $m(\alpha)=m_1!\cdots m_{\ell}!$. Note that if $\alpha \in Q_{k,n}$, then $m(\alpha)= 1$. The set $\{m(\alpha)^{-1/2}\ e_{(\alpha)}: \alpha \in G_{k,n}\}$ is an orthonormal basis of $\vee^k \Hil$. The restriction of $\otimes^k A$ to this subspace is denoted by $\vee^k A$ and is called the $k$th \emph{symmetric tensor power} of $A$.  Given two elements $\alpha$ and $\beta$ of $G_{k,n}$, let $A[\alpha|\beta]$ denote the $k \times k$ matrix whose $(r,s)$-entry is the $(i_r,j_s)$-entry of $A$. Note that it may not be a submatrix of $A$. With respect to the above mentioned basis, the $(\alpha,\beta)$-entry of $\vee^k A$ is $(m(\alpha)m(\beta))^{-1/2} \per{A[\alpha|\beta]}$. 

There is a classical formula due to Jacobi for the first derivative of the determinant map. Bhatia and Jain \cite{tanvi} obtained expressions for higher order derivatives of this map. These are discussed in Section 2. Along with the determinant the permanent function has been of great interest. In Section 3 we give formulas for derivatives of all orders for the permanent function that we obtained in \cite{grover}. Then we move on to operator valued functions. We give formulas for the derivatives (of all orders) for the maps that take a matrix $A$ to $\otimes^k A$, $\wedge^k A$ and $\vee^k A$, in Sections 4, 5 and 6 respectively. The formulas for the derivatives of the map $A\rightarrow \wedge^k A$ were obtained by Jain \cite{tanvi2}. Note that $\wedge^n A=\det A$. The derivatives of the map $A\rightarrow \vee^k A$ were obtained by us in \cite{grover}. In another direction, the determinant of $A$ is the $n$th coefficient of the characteristic polynomial of an $n\times n$ matrix $A$. In Section 7 we discuss all coefficients in the characteristic polynomial and their derivatives, obtained in \cite{tanvi2}. A major application of these formulas is to find perturbation bounds for these functions and we show how these are obtained.

To state the results concisely, we need some multiindex notations which we briefly recall from \cite{R.bhatia3}, \cite{tanvi} and \cite{grover}.

\textbf{Notations.}
For $\I=(i_1,\ldots,i_k)$, the symbol $|\I|$ denotes the sum $i_1+\cdots+i_k$.
If $\I,\J \in Q_{m,n}$, then we denote by $A(\I|\J)$, the $(n-m)\times(n-m)$ submatrix obtained from $A$ by deleting rows corresponding to $\I$ and columns corresponding to $\J$. The $j^{th}$ column of a matrix $X$ is denoted by $X_{[j]}$. Given $n \times n$ matrices $X^1$,\ldots,$X^m$ and $\J=(j_1,\ldots,j_m) \in Q_{m,n}$, we denote by $A(\J;X^1,\ldots,X^m)$, the matrix obtained from $A$ by replacing the $j_p^{th}$ column of $A$ by the $j_p^{th}$ column of $X^p$ for $1\leq p \leq m$, and keeping the rest of the columns unchanged, that is, if $Z= A(\J;X^1,\ldots,X^m)$, then $Z_{[j_p]}=X^p_{[j_p]}$ for $1\leq p \leq m$, and 
$Z_{[\ell]}=A_{[\ell]}$ if $\ell$ does not occur in $\J$. Let $\sigma$ be a permutation on $m$ symbols, then $Y^{\sigma}_{[\J]}$ denotes the matrix in which $Y^{\sigma}_{[j_p]}=X^{\sigma(p)}_{[j_p]}$ for $1\leq p \leq m$ and $Y^{\sigma}_{[\ell]}=0$ if $\ell$ does not occur in $\J.$

\section{Determinant}

Let $\det:\mat \rightarrow \C$ be the map taking an $n \times n$ complex matrix to its determinant. The Jacobi formula for the derivative of the determinant of a matrix has been well known for a long time. It says that
\begin{equation}
\De \det (A)(X)=\tr (\adj(A)X),\label{jacobi}
\end{equation}
where $\adj(A)$ stands for the \emph{classical adjoint} of $A$. We first note some equivalent descriptions of Jacobi's formula. 

For $1\leq i,j\leq n$ let $A(i|j)$ denote the $(n-1)\times (n-1)$ matrix obtained from $A$ by deleting its $i$th row and $j$th column. Then \eqref{jacobi} can be restated as 
\begin{equation}
\De \det (A)(X)= \sum_{i,j} (-1)^{i+j} x_{ij}\det A(i|j).\label{jacobieq1}
\end{equation}

For $1\leq j\leq n$ let $A(j;X)$ denote the matrix obtained from $A$ by replacing the $j^{th}$ column of $A$ by the $j^{th}$ column of $X$ and keeping the rest of the columns unchanged. Then \eqref{jacobi} can also be written as 
\begin{equation}
\De\det{(A)}(X)= \sum_{j=1}^{n}\det{A(j;X)}. \label{jacobieq2}
\end{equation} 

In \cite{tanvi}, the authors have derived the following formulas for the higher order derivatives of the determinant map that are visible generalisations of \eqref{jacobi}, \eqref{jacobieq1} and \eqref{jacobieq2}.

\begin{thm}
For $1\leq m\leq n$
\begin{equation}
\De^m \det{(A)(X^1,\ldots,X^m)}=\sum_{\sigma \in S_m} \sum_{\J \in Q_{m,n}} \det{A(\J;X^{\sigma(1)},X^{\sigma(2)},\ldots, X^{\sigma(m)})}.\label{derivativedet1}
\end{equation}
In particular, 
\begin{equation}
\De^m \det{(A)(X,\ldots,X)}=m! \sum_{\J \in Q_{m,n}} \det{A(\J;X,\ldots,X)}.
\end{equation}
\end{thm}

Note that for the special case $m=n$ 
\begin{equation}
\De^n \det (A)(X,\ldots,X)=n!\det X.
\end{equation}

To understand the above theorem, first let $n=2$. We know that the determinant function is linear in each of its columns. So it is a bilinear map from $\C^2\times \C^2$ to $\C$ and hence differentiable at every point $(a_1,a_2)\in \C^2\times \C^2$. The derivative is the linear mapping $\De \det(a_1,a_2)$ whose action at any $(x_1,x_2)$ is given by
$$\De \det(a_1,a_2)((x_1,x_2))=\det(a_1,x_2)+\det(x_1,a_2).$$
This is \eqref{derivativedet1} for $n=2$ and $m=1$.
Extending the same idea for any $n$ and any $m$, $1\leq m\leq n$, one can obtain \eqref{derivativedet1}.

Since $\det A$ is an $n$-linear map of its columns, it follows that for $m>n$
\begin{equation}
\De^m \det (A)(X^1,\ldots, X^m)=0.
\end{equation}

\begin{thm}
For $1\leq m\leq n$
\begin{equation}
\De^m \det{(A)(X^1,\ldots,X^m)}=\sum_{\sigma \in S_m} \sum_{\I,\J \in Q_{m,n}} (-1)^{|\I|+|\J|}\det{A(\I|\J)} \det{Y^{\sigma}_{[\J]}[\I|\J]}. \label{derivativedet2}
\end{equation}
In particular, 
\begin{equation}
\De^m \det{(A)(X,\ldots,X)}=m! \sum_{\I,\J \in Q_{m,n}} (-1)^{|\I|+|\J|} \det{A(\I|\J)} \det{X[\I|\J]}.
\end{equation}
\end{thm}

To describe an analogue of Jacobi's formula \eqref{jacobi} we introduce a notation. Let $X^1,\ldots,X^m$ be $m$ operators on $\Hil$. Consider the operator
\begin{equation}
\frac{1}{m!}\sum_{\sigma \in S_m} X^{\sigma(1)}\otimes X^{\sigma(2)}\otimes \cdots \otimes X^{\sigma(m)}\label{operator}
\end{equation}
on the space $\otimes ^m \Hil$. This leaves the space $\wedge^m \Hil$ invariant, and the restriction of this operator to the subspace $\wedge ^m \Hil$ is denoted by $$X^1 \wedge X^2 \wedge \cdots \wedge X^m.$$
The matrix $\adj{A}$ is the transpose of the matrix whose entries are \\ $(-1)^{i+j} \det A(i|j)$. It can be identified with an operator on the space $\wedge^{n-1} \Hil$. Call this operator $\tilde{\wedge}^{n-1} A$. It is unitarily similar to the transpose of the matrix $\wedge^{n-1} A$. Likewise, for $\I,\J \in Q_{m,n}$, the transpose of the matrix with entries $(-1)^{|\I|+|\J|} \det A(\I|\J)$ can be identified with an operator on the space $\wedge^{n-m}\Hil$. Call this operator $\tilde{\wedge}^{n-m} A$. It is unitarily similar to the transpose of the matrix $\wedge^{n-m} A$.
In this notation, the Jacobi formula \eqref{jacobi} can be written as 
\begin{equation}
\De \det (A)(X)=\tr(\tilde{\wedge}^{n-1}A)X.\label{jacobitrform}
\end{equation}

The next theorem is an extension of this.

\begin{thm}
For $1\leq m\leq n$
\begin{equation}
\De^m \det{(A)(X^1,\ldots,X^m)}=m! \tr\left[(\tilde{\wedge}^{n-m}A)(X^1\wedge\cdots\wedge X^m)\right].\label{derivativedet3}
\end{equation}
In particular,
\begin{equation}
\De^m \det{(A)(X,\ldots,X)}=m! \tr\left[(\tilde{\wedge}^{n-m}A)(\wedge^m X)\right].
\end{equation}
\end{thm}

Let $s_1(A)\geq\cdots\geq s_n(A)\geq 0$ be the singular values of $A$ and let $p_k(x_1,\ldots,x_n)$ denote $k$th elementary symmetric polynomial in $n$ variables. From \eqref{jacobitrform} it follows that 
\begin{equation}
\|\De \det A\|=p_{n-1}(s_1(A),\ldots,s_n(A)).\label{detnorm}
\end{equation}
In \cite{friedland}, Bhatia and Friedland proved a more general theorem. They showed that for $1\leq k\leq n$
\begin{equation}
\|\De \wedge^k A\|=p_{k-1}(s_1(A),\ldots,s_k(A)).\label{derantisym}
\end{equation}
For $k=n$ this reduces to \eqref{detnorm}.
Using the theorem stated above, \eqref{detnorm} can be extended to higher order derivatives of the determinant map.

\begin{thm}\label{normdetder}
For $1\leq m\leq n$
\begin{equation}
\|\De^m \det A\|=m!\ p_{n-m}(s_1(A),\ldots,s_n(A)).
\end{equation}
\end{thm}

As a corollary, the following perturbation bound is obtained using Taylor's theorem.

\begin{cor}
Let $X \in \mat$. Then
\begin{equation}
|\det(A+X)-\det(A)|\leq \sum_{m=1}^n p_{n-m}(s_1(A),\ldots,s_n(A)) \|X\|^m.
\end{equation}
Consequently,
\begin{equation}
|\det(A+X)-\det(A)|\leq (\|A\|+\|X\|)^n-\|A\|^n.
\end{equation}
\end{cor}

\section{Permanent}

The \emph{permanent} of $A$, written as $\per{(A)}$, or simply $\per{A}$, is defined by
\begin{equation}
\per{A} = \sum_\sigma a_{1 \sigma (1)}a_{2 \sigma (2)}\cdots a_{n \sigma (n)},\label{1}
\end{equation}
where the summation extends over all the permutations of \{1,\ 2,\ \ldots,\ n\}.
Let $\per: \mat \rightarrow \C$ be the map taking an $n\times n$ matrix to its permanent. This is a differentiable map.

The \emph{permanental adjoint} of $A$, denoted by $\padj{(A)}$, is the $n \times n$ matrix whose $(i,j)$-entry is $\per{A(i|j)}$ (See \cite[p. 237]{merris}). This difference of transpose in the definitions of $\adj$ and $\padj$ is just a matter of convention. We obtain the following result similar to the \emph{Jacobi formula} for determinant.

\begin{thm}
For each $X \in \mat$
\begin{equation}
\De\per{(A)}(X) = \tr(\padj(A)^t X).  \label{4} 
\end{equation}

\end{thm}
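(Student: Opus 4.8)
The plan is to compute $\D\per(A)(X)$ directly from the definition in \eqref{3} and to exploit the basic structural property that $\per$ is a multilinear function of the rows (equivalently, the columns) of its matrix argument. Writing the rows of $A$ as $R_1,\ldots,R_n$ and the rows of $X$ as $X_1,\ldots,X_n$, the matrix $A+tX$ has rows $R_i+tX_i$. Differentiating the multilinear expression $\per(R_1+tX_1,\ldots,R_n+tX_n)$ at $t=0$ via the product rule produces a sum over the $n$ rows,
\begin{equation*}
\D\per(A)(X)=\sum_{i=1}^n \per(R_1,\ldots,R_{i-1},X_i,R_{i+1},\ldots,R_n),
\end{equation*}
each summand being the permanent of the matrix obtained from $A$ by replacing its $i$-th row with the $i$-th row of $X$.

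The next step is to expand each summand along the modified row. The permanent admits a Laplace-type expansion along any row, $\per B=\sum_{j=1}^n b_{ij}\per B(i|j)$, which is the permanental analogue of cofactor expansion but carries no signs. Applying this to the $i$-th summand and noting that deleting row $i$ discards the substituted row entirely, each minor reduces to $\per A(i|j)$, the $(i,j)$-entry of $\padj(A)$. Hence
\begin{equation*}
\D\per(A)(X)=\sum_{i=1}^n\sum_{j=1}^n x_{ij}\,\per A(i|j)=\sum_{i,j}\big(\padj(A)\big)_{ij}\,x_{ij}.
\end{equation*}

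Finally, I would recognize this double sum as a trace. For any $M,X\in\mat$ one has $\tr(M^TX)=\sum_{i,j}M_{ij}X_{ij}$; taking $M=\padj(A)$ gives exactly \eqref{4}. I do not expect a serious obstacle here: the only points requiring care are justifying the row expansion of the permanent (the unsigned analogue of the determinant's cofactor expansion) and keeping the indices straight so that the transpose appears correctly in the trace. An alternative, entirely equivalent route would expand $\per(A+tX)$ as a polynomial in $t$ directly from \eqref{1} and read off the linear coefficient, but the row-multilinearity argument is cleaner and anticipates the higher-order formulas developed later.
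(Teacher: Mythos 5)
Your proof is correct and takes essentially the same route as the paper: multilinearity of $\per$ plus the unsigned Laplace expansion, then the trace identity $\tr(M^TX)=\sum_{i,j}M_{ij}x_{ij}$. The only cosmetic differences are that you work with rows where the paper works with columns (equivalent, since $\per A = \per A^T$) and that you spell out the expansion step connecting the replaced-row sum to $\padj(A)$, which the paper records separately as equation \eqref{6} right after the theorem.
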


This can be restated as 
\begin{equation}
\De\per{(A)}(X)= \sum_{j=1}^{n}\per{A(j;X)}. \label{5}
\end{equation}

The \emph{Laplace expansion theorem} for permanents \cite[p. 16]{minc} says that
for any $1\leq m\leq n$ and for any $\I \in Q_{m,n}$
\begin{equation}
\per{A}=\sum_{\J \in Q_{m,n}} \per{A[\I|\J]} \per{A(\I|\J)}.
\end{equation}
In particular for any $i,\ 1\leq i\leq n$,
\begin{equation}
\per{A}=\sum_{j=1}^{n} a_{ij}\, \per{A(i|j)}.\label{2}
\end{equation} 
Using this, equation \eqref{5} can be rewritten as
\begin{equation}
\De\per{(A)}(X)=\sum_{i=1}^{n} \sum_{j=1}^{n} x_{ij} \per{A(i|j)}. \label{6}
\end{equation}

The following two theorems are analogues of Theorems 2.1 and 2.2 of Section 2 and also generalisations of equations \eqref{5} and \eqref{6} respectively. The key idea here is to use the fact that the permanent function is linear in each of its columns.

\begin{thm}
For $1\leq m\leq n$
\begin{equation}
\De^m \per{(A)(X^1,\ldots,X^m)}=\sum_{\sigma \in S_m} \sum_{\J \in Q_{m,n}} \per{A(\J;X^{\sigma(1)},X^{\sigma(2)},\ldots, X^{\sigma(m)})}.\label{8}
\end{equation}
In particular, 
\begin{equation}
\De^m \per{(A)(X,\ldots,X)}=m! \sum_{\J \in Q_{m,n}} \per{A(\J;X,\ldots,X)}.
\end{equation}
\end{thm}

\begin{thm}
For $1\leq m\leq n$
\begin{equation}
\De^m \per{(A)(X^1,\ldots,X^m)}=\sum_{\sigma \in S_m} \sum_{\I,\J \in Q_{m,n}} \per{A(\I|\J)} \per{Y^{\sigma}_{[\J]}[\I|\J]}. \label{2.9}
\end{equation}
In particular, 
\begin{equation}
\De^m \per{(A)(X,\ldots,X)}=m! \sum_{\I,\J \in Q_{m,n}} \per{A(\I|\J)} \per{X[\I|\J]}.
\end{equation}
\end{thm}

Note that
\begin{equation}
\De^n \per (A)(X,\ldots,X)=n!\per X,
\end{equation}
and for $m>n$
\begin{equation}
\De^m \per (A)(X^1,\ldots, X^m)=0.
\end{equation}

As in the case of determinants, it would be interesting to have an expression analogous to \eqref{4}.
Consider the operator given in \eqref{operator}.
It leaves the space $\vee^m \Hil$ invariant. We use the notation $X^1 \vee X^2 \vee \cdots \vee X^m$ for the restriction of this operator to the subspace $\vee^m \Hil$.

Let $P_m$ be the canonical projection of $\vee^m \Hil$ onto the subspace \\ $\{e_{(\alpha)} : \alpha \in Q_{m,n}\}$. Then there is a permutation of the orthonormal basis $\{m(\alpha)^{-1/2} e_{(\alpha)}: \alpha\in G_{m,n}\}$ in which 
$$P_m=\left[\begin{array}{ccc} 
I &\ O \\
O &\ O
\end{array}\right]$$
and the matrix $T_m=\left(\per A[\alpha|\beta]\right)_{\alpha,\beta\in Q_{m,n}}$ is the upper left corner of $\vee^m A$. Then 
$$P_m (\vee^m A) P_m=
\left[\begin{array}{cc} 
T_m & \ O\\
O  & \ O
\end{array}\right].$$
Let $U$ be the $\binom{n}{m} \times \binom{n}{m}$ unitary matrix given by
$$U=\left[\begin{array}{cccc}
\ & \ & \ &\  1\\
\ & \ &\ 1 & \ \\
\ & \ \text{\rotatebox{90}{\mbox{$\ddots$}}}& \ & \ \\
1& \ & \ & \ 
\end{array}\right].$$
Then $U^* T_m U$ is an $\binom{n}{m} \times \binom{n}{m}$ matrix. For $\alpha,\beta \in Q_{n-m,n}$ the $(\alpha,\beta)$-entry of $U^* T_m U$ is $\per A(\alpha|\beta)$. Let $\tilde{U}$ be the $\binom{n+m-1}{m}\times \binom{n+m-1}{m}$ matrix given by 
$$\tilde{U} =
\left[\begin{array}{cc} 
U & \ O\\
O & \  I
\end{array}\right].$$
We denote by $\tilde {\vee}^m A$, the matrix $\tilde{U}^* (\vee^m A)^t \tilde{U}$. Then 
\begin{equation}
P_m(\tilde {\vee}^m A)P_m=\left[\begin{array}{cc}
U^* T_m^t U & \ O\\
O & \ O
\end{array}\right].
\end{equation}
In particular for $m=n-1$ this becomes
$$P_{n-1}(\tilde {\vee}^{n-1} A)P_{n-1}=\left[\begin{array}{ccc}
(\padj A)^t & \ O\\
O  & \ O
\end{array}\right].$$
Identifying an $n\times n$ matrix $X$ with $\binom{2n-2}{n-1}\times \binom{2n-2}{n-1}$ matrix
$\left[\begin{array}{ccc} 
X & \ O\\
O & \ O\end{array}\right]$, equation \eqref{4} can be written as 

\begin{equation}
\De\per{(A)(X)}=\tr{(P_{n-1}(\tilde{\vee}^{n-1} A)P_{n-1})X}.\label{10}
\end{equation}

Its generalisation for higher order derivatives can be given as follows.

\begin{thm}
For $1\leq m\leq n$
\begin{eqnarray}
\De^m \per{(A)(X^1,\ldots,X^m)}&=&m!\tr\left[\left(P_{n-m}(\tilde{\vee}^{n-m} A)P_{n-m}\right)\right.\nonumber\\
&& \qquad\left.\left(P_m(X^1\vee\cdots\vee X^m)P_m\right)\right].\label{2.17}
\end{eqnarray}
In particular,
$$\De^m \per{(A)(X,\ldots,X)}=m!\tr{\left[\left(P_{n-m}(\tilde{\vee}^{n-m}A)P_{n-m}\right)\left(P_m(\vee^m X)P_m\right)\right]}.$$
\end{thm}

An upper bound for the norms of the higher order derivatives can be obtained from this expression.

\begin{thm}\label{pernorm}
For $1\leq m\leq n$
\begin{equation}
\|\De^m \per A\|\leq \frac{n!}{(n-m)!}\|A\|^{n-m}.
\end{equation}
\end{thm}

By Taylor's theorem, we get the following perturbation bound.

\begin{cor}
Let $X\in \mat$. Then
\begin{equation}
|\per(A+X)-\per A|\leq (\|A\|+\|X\|)^n-\|A\|^n. \label{3.9}
\end{equation}
\end{cor}

\section{Tensor Power}

Let $\otimes^k: \mat \rightarrow \mathbb M(n^k)$ be the map which takes an $n \times n$ matrix $A$ to its $k$th tensor power. Note that for any two matrices $A,B$ 
\begin{equation}
\otimes^k (A+B)=\sum_{\substack{j_i\geq 0\\j_1+\cdots+j_p=k}} (\otimes^{j_1}A) \otimes (\otimes^{j_2}B)\otimes (\otimes^{j_3}A)\otimes \cdots\otimes (\otimes^{j_p}B).\label{expansionoftensor}
\end{equation}

Using the expression for higher order derivatives \eqref{derivativedefn} and the above expansion formula, one can easily see that $\De^m \otimes ^k (A)(X^1,\ldots,X^m)$ is the coefficient of $t_1 t_2\ldots t_m$ in $\otimes^k (A+t_1X^1+\cdots+t_mX^m)$. An explicit expression can be given as follows.

\begin{thm}
For $1 \leq m \leq k$
\begin{eqnarray}
\De^m \otimes ^k (A)(X^1,\ldots,X^m)&=& \nonumber\\
&&\hspace{-3.5cm}\sum_{\sigma \in S_m} \sum_{\substack{j_i \geq 0\\j_1+\cdots+j_{m+1}=k-m}} (\otimes^{j_1}A)\otimes X^{\sigma(1)}(\otimes^{j_2}A)\otimes X^{\sigma(2)}\otimes\cdots \nonumber\\
&&\cdots\otimes (\otimes^{j_m}A) \otimes X^{\sigma(m)}\otimes (\otimes^{j_{m+1}}A).
\end{eqnarray} 
\end{thm}

We note that
\begin{equation}
\De^k \otimes^k(A)(X,\ldots,X)=k!\ (\otimes^k X)
\end{equation}
and for $m>k$
\begin{equation}
\De^m \otimes^k (A)(X^1,\ldots,X^m)=0.
\end{equation}
Norms of these derivatives can be computed from this expression.
\begin{thm}
For $1\leq m\leq k$
\begin{equation}
\|\De^m \otimes ^k (A)\|= \frac{k!}{(k-m)!} \|A\|^{k-m}.\label{tensornorm}
\end{equation}
\end{thm}

The $\leq$ inequality in the above expression is a consequence of the triangle inequality and the fact that $$\|A \otimes B\|= \|A\| \|B\|.$$ The equality in \eqref{tensornorm} is attained at the tuple $\left(A/\|A\|,\ldots,A/\|A\|\right)$.

A perturbation bound follows from here using Taylor's theorem.
\begin{cor}
For $X \in \mat$
\begin{equation}
\|\otimes^k(A+X)-\otimes^k A\| \leq (\|A\|+\|X\|)^k-\|A\|^k.
\end{equation}
\end{cor}

\section{Antisymmetric Tensor Power}
Consider the map $\wedge^k: \mat \rightarrow \mathbb M(\binom{n+k-1}{k})$ which takes an $n \times n$ matrix $A$ to its $k$th antisymmetric tensor power. Recall that the $(\alpha,\beta)$-entry of $\wedge^k A$ is $\det A[\alpha|\beta]$. Using the expression \eqref{derivativedet3}, the higher order derivatives of the map $\wedge^k$ can be obtained. To derive an explicit formula for this, a notation involving multiindices is required.

For elements $ \gamma'=(\gamma'_1,\ldots,\gamma'_m)\in Q_{m,n}$ and $ \alpha=(\alpha_1,\ldots,\alpha_k)\in Q_{k,n}$ we write $\gamma'\subseteq \alpha$ if $1\leq m\leq k\leq n$ and $ \{\gamma'_1,\ldots,\gamma'_m\}\subseteq \{\alpha_1, \ldots, \alpha_k\}$. Further whenever $\gamma'\subseteq \alpha$, we denote by $\alpha-\gamma'$, the element $(\gamma_1,\ldots,\gamma_{k-m})\in Q_{k-m,n}$ where $\{\gamma_1,\ldots,\gamma_{k-m}\}=\{\alpha_1,\ldots,\alpha_k\}\setminus \{\gamma'_1,\ldots,\gamma'_m\}.$
The number $\alpha_1+\cdots+\alpha_k$ is denoted by $|\alpha|$. Let $\alpha'=(1,\ldots,n)-\alpha$. Let $\pi_{\alpha}$ be the permutation on $\{1,2,\ldots,n\}$ defined by $\pi_{\alpha}(\alpha_i)=i$ for all $i=1,\ldots,k$ and $\pi_{\alpha}(\alpha'_j)=k+j$ for all $j=1,\ldots, n-k$.

Let $Y=(y_{\gamma',\delta'})$ be any $\binom{n}{m} \times \binom{n}{m}$ matrix and $\gamma,\delta \in Q_{k-m,n}$. Define an $\binom{n}{k}\times \binom{n}{k}$ matrix 
$Y^{(k)}(\gamma,\delta)$ as follows. For $\alpha, \beta \in Q_{k,n}$ the $(\alpha, \beta)$-entry of $Y^{(k)}(\gamma,\delta)$ is $(-1)^{|\pi_{\alpha}(\gamma)|+|\pi_{\beta}(\delta)|}\ y_{\alpha-\gamma,\beta-\delta}$ if $\gamma \subseteq \alpha$ and $\delta \subseteq \beta$ and $0$ otherwise.

\begin{thm}\label{antisym}
Let $A \in \mat$. Then for $1\leq m\leq k \leq n$
\begin{eqnarray}
\De^m \wedge^k (A)(X^1,\ldots,X^m)&=&\nonumber \\
& &\hspace{-3cm}m!\sum_{\gamma, \delta \in Q_{k-m,n}} \det A[\gamma|\delta] \ (X^1\wedge \cdots \wedge X^m)^{(k)}(\gamma,\delta). 
\end{eqnarray}
In particular,
\begin{equation}
\De^m \wedge^k (A)(X,\ldots,X)=m! \sum_{\gamma, \delta \in Q_{k-m,n}} \det A[\gamma|\delta]\ (\wedge^m X)^{(k)}(\gamma,\delta).
\end{equation}
\end{thm}

Note that 
\begin{equation}
\De^k \wedge^k (A)(X,\ldots,X)=k! \ (\wedge^k X)
\end{equation}
and if $k> n$ or $m>k$, then
\begin{equation}
\De^m \wedge^k (A) (X^1,\ldots,X^m)=0.
\end{equation}
In \cite{friedland}, Bhatia and Friedland gave the norm of the first derivative of the map $\wedge^k$ as follows:
$$\|\De \wedge^k A\|=p_{k-1}(s_1(A),\ldots,s_k(A)).$$ The following theorem by Jain \cite{tanvi2} is an extension of this for its higher order derivatives.

\begin{thm}
For $1\leq m\leq k\leq n$
\begin{equation}
\|\De^m \wedge^k A\|=m!\ p_{k-m}(s_1(A),\ldots,s_k(A)).
\end{equation}
\end{thm}

Note that  for $k=n$ this reduces to Theorem \ref{normdetder} for the determinant map. As a corollary, a perturbation bound can be obtained using Taylor's theorem.

\begin{cor}
For any $ X \in \mat$
\begin{equation}
\|\wedge^k (A+X)-\wedge^k (A)\|\leq \sum_{m=1}^k p_{k-m}(s_1(A),\ldots,s_k(A)) \|X\|^m.
\end{equation}
Consequently,
\begin{equation}
\|\wedge^k (A+X)-\wedge^k (A)\|\leq (\|A\|+\|X\|)^k-\|A\|^k.
\end{equation}
\end{cor}

\section{Symmetric Tensor Power}
Consider the map $\vee^k: \mat \rightarrow \mathbb M(\binom{n+k-1}{k})$ which takes an $n \times n$ matrix $A$ to its $k$th symmetric tensor power. For elements $ \gamma'=(\gamma'_1,\ldots,\gamma'_m)\in G_{m,n}$ and $ \alpha=(\alpha_1,\ldots,\alpha_k)\in G_{k,n}$ we write $\gamma'\subseteq \alpha$ if $1\leq m\leq k$ and $ \{\gamma'_1,\ldots,\gamma'_m\}\subseteq \{\alpha_1, \ldots, \alpha_k\}$, with multiplicities allowed such that if $\alpha_{\ell}$ occurs in $\alpha$, say $d_{\alpha}$ times, then $\alpha_{\ell}$ cannot occur in $\gamma'$ for more than $d_{\alpha}$ times. Also if $\gamma' \subseteq \alpha$, then $\alpha-\gamma'$ will denote the element $(\gamma_1,\ldots,\gamma_{k-m})$ of $G_{k-m,n}$, where $\gamma_{\ell}\in \{\alpha_1,\ldots,\alpha_k\}$ that is, $\gamma_{\ell}$ is some $\alpha_{i_{\ell}}$ and occurs in $\alpha-\gamma'$ exactly $d_{\alpha}-d_{\gamma'}$ times where $d_{\alpha}$ and $d_{\gamma'}$ denote the multiplicities of $\alpha_{i_{\ell}}$ in $\alpha$, and $\gamma'$, respectively.

Let $Y$ be a $\binom{n+m-1}{m} \times \binom{n+m-1}{m}$ matrix and for $1\leq m \leq k$ let $\gamma, \delta \in G_{k-m,n}$. We denote by $Y_{(k)}(\gamma,\delta)$, the $\binom{n+k-1}{k} \times \binom{n+k-1}{k}$ matrix whose indexing set is $G_{k,n}$, and for $\alpha, \beta \in G_{k,n}$ the $(\alpha, \beta)$-entry of $Y_{(k)}(\gamma,\delta)$ is $\left(\frac{m(\alpha-\gamma) m(\beta-\delta)}{m(\alpha) m(\beta)}\right)^{1/2}$ times $(\alpha-\gamma,\beta-\delta)$-entry of $Y$ if $\gamma \subseteq \alpha$ and $\delta \subseteq \beta$ and zero otherwise.

We know that for any $\alpha,\beta\in G_{k,n}$ the $(\alpha,\beta)$-entry of $\vee^k A$ is\\ 
$(m(\alpha) m(\beta))^{-1/2}\per{A[\alpha|\beta]}$. Calculating the derivative of each entry of $\vee^k A$ by using the results from Section 3 will lead to the following.

\begin{thm}
Let $A \in \mat$. Then for $1\leq m\leq k$
\begin{equation}
\De^m \vee^k (A)(X^1,\ldots,X^m)=m!\ \sum_{\gamma, \delta \in G_{k-m,n}} \per A[\gamma|\delta]\ (X^1\vee \cdots \vee X^m)_{(k)}(\gamma,\delta) .\label{3.1}
\end{equation}
\end{thm}

We note here that
\begin{equation}
\De^k \vee^k (A)(X,\ldots,X)=k!\ (\vee^k X)
\end{equation}
and for $m>k$
\begin{equation}
\De^m \vee^k (A)(X^1,\ldots,X^m)=0.
\end{equation}
Bhatia \cite{bhatia} computed the exact norm of the first derivative of the map $\vee^k$:
$$\|\De \vee^k (A)\|=k\|A\|^{k-1}.$$
We extend this result for all order derivatives of the map $\vee^k$ in \cite{grover}.

\begin{thm}
For $1\leq m\leq k$
\begin{equation}
\|\De^m \vee^k A\|=\frac{k!}{(k-m)!}\|A\|^{k-m}.
\end{equation}
\end{thm}

Since $\per A$ is $(\alpha,\alpha)$-entry of $\vee^n A$ for $\alpha=(1,\ldots,n)$, Theorem \ref{pernorm} follows from the above theorem, by putting $k=n$.

By Taylor's theorem, we obtain higher order perturbation bounds.
\begin{cor}
For $X\in \mat$
\begin{equation}
\|\vee^k(A+X)-\vee^k A\|\leq (\|A\|+\|X\|)^k-\|A\|^k.
\end{equation}
\end{cor}

\section{Coefficients of Characteristic Polynomial}

The \emph{characteristic polynomial} of $A$ is defined by
$$\det(xI-A).$$ It can also be written as
\begin{equation}
 x^n - g_1 x^{n-1} + g_2 x^{n-2} - \ldots + (-1)^n g_n \label{charpoly},
\end{equation}
where $g_k$ is the sum of $k \times k$ principal minors of $A$. In particular, $g_1$ is the trace of $A$ and $g_n$ is the determinant of $A$.
We consider $g_k: \mat \rightarrow \C$ as the map taking a matrix to the $k^{th}$ coefficient in \eqref{charpoly}. Then 
\begin{equation}
 g_k(A)=\sum_{\I \in Q_{k,n}} \det A_{\I}, \label{gk1}
 \end{equation}
where $A_{\I}$ denotes the submatrix $A[\I|\I]$ of $A$.
In other words, \eqref{gk1} can also be written as 
\begin{equation}
g_k(A)= \tr (\wedge^k (A)).\label{gk2}
\end{equation}

In \cite{tanvi2} Jain considers the expression \eqref{gk2} to obtain the derivatives of the coefficients $g_k$. For each $k$ the map $g_k$ is a composition of two maps $\wedge^k:\mat\rightarrow \matk$ and the trace map $\tr: \matk\rightarrow \C$. Hence from Theorem \ref{antisym} the derivatives for the coefficients can be obtained. One can also use \eqref{gk1} and the expressions for the derivatives of the determinant to obtain the same expression as given below. We first introduce some notation. 

For $n\times n$ matrices $X^1,\ldots,X^n$ their \emph{mixed discriminant} \cite{bapat2,gurvits} is defined by
$$\Delta(X^1,\ldots,X^n)=\frac{1}{n!}\sum_{\sigma\in S_n} \det\left[X^{\sigma(1)}_{[1]},\ldots, X^{\sigma(n)}_{[n]}\right].$$
The matrix in the square brackets is the matrix whose $j$th column is the $j$th column of $X^{\sigma(j)}$. When all $X^j=X$, $$\Delta(X,\ldots,X)=\det X.$$

\begin{thm}
For $1\leq m\leq k\leq n$
\begin{eqnarray}
\De^m g_k(A)(X^1,\ldots,X^m)&=& m!\sum_{\alpha\in Q_{k,n}}\sum_{\substack{\gamma, \delta \in Q_{k-m,n}\\ \gamma,\delta \subseteq \alpha}} (-1)^{|\pi_{\alpha}(\gamma)|+|\pi_{\alpha}(\delta)|} \det A[\gamma|\delta]\nonumber \\
& & \hspace{-0.5cm}\times \Delta(X^1[\alpha-\gamma|\alpha-\delta],\ldots,X^m[\alpha-\gamma|\alpha-\delta]).\label{dergk}
\end{eqnarray}
In particular,
\begin{eqnarray}
\De^m g_k(A)(X,\ldots,X)&=&\nonumber\\
& &\hspace{-3.4cm}m!\sum_{\alpha\in Q_{k,n}}\sum_{\substack{\gamma, \delta \in Q_{k-m,n}\\ \gamma,\delta \subseteq \alpha}} (-1)^{|\pi_{\alpha}(\gamma)|+|\pi_{\alpha}(\delta)|} \det A[\gamma|\delta] \det X[\alpha-\gamma|\alpha-\delta].\nonumber\\
\end{eqnarray}
\end{thm}

We note here the special case
\begin{equation}
\De^k g_k(A)(X,\ldots,X)=k!\sum_{\alpha\in Q_{k,n}} \det X[\alpha|\alpha]
\end{equation}
and the fact that for $m>k$
\begin{equation}
\De^m g_k(A)(X^1,\ldots, X^m)=0.
\end{equation}
Jain gives another expression for the derivative $\De^m g_k(A)$ in \cite{tanvi2}.
Let $X^1, \ldots, X^m$ be $n\times n $ matrices. Denote the matrix $X^1\wedge\cdots\wedge X^m$ by $\tilde{X}$. For any $\alpha\in Q_{k,n}$ consider the $\binom{n}{k-m}\times \binom{n}{m}$ matrix $Q(\alpha)$ whose $(\gamma,\delta)$-entry $(\gamma\in Q_{k-m,n},\ \delta\in Q_{m,n})$ is given by
$$Q(\alpha)_{\gamma,\delta}=\left\{ \begin{array}{rl}
(-1)^{|\pi_{\alpha}(\gamma)|} &\mbox{ if $\gamma,\delta\subseteq \alpha,\ \gamma\cup\delta=\alpha$}\\
0 &\mbox{ otherwise}
\end{array}
\right.$$
Now define the matrix $\tilde{X}^{(\alpha)}=\left(Q(\alpha)\tilde{X}Q(\alpha)^t\right)^t$, that is, $\tilde{X}^{(\alpha)}$ is the $\binom{n}{k-m}\times \binom{n}{k-m}$ matrix whose $(\gamma,\delta)$-entry is $$(-1)^{|\pi_{\alpha}(\gamma)|+|\pi_{\alpha}(\delta)|}\ \Delta(X^1[\alpha-\gamma|\alpha-\delta],\ldots,X^m[\alpha-\gamma|\alpha-\delta]) \text{ if } \gamma,\delta\subseteq \alpha$$
and is zero otherwise.

Then \eqref{dergk} can also be expressed in the following form.
\begin{thm}
For $1\leq m\leq k\leq n$
\begin{equation}
\De^m g_k(A)(X^1,\ldots,X^m)=m!\tr\left(\wedge^{k-m}(A)\left(\sum_{\alpha\in Q_{k,n}} \tilde{X}^{(\alpha)}\right)\right).
\end{equation}
\end{thm}

Using this Jain \cite{tanvi2} obtains an upper bound on the norms of the derivatives of $g_k$.

\begin{thm}
For $1\leq m\leq k\leq n$
\begin{equation}
\|\De^m g_k(A)\|\leq \frac{(n-k+m)!}{(n-k)!} p_{k-m}(s_1(A),\ldots,s_n(A)).
\end{equation}
\end{thm}

An interesting perturbation bound can be derived using Taylor's theorem.
For nonnegative integers $p,q,r$ with $q+r\leq p$, denote by $\binom{p}{q,r}$, the trinomial coefficient $\frac{p!}{q!\ r! \ (p-q-r)!}$.

\begin{cor}
Let $1\leq k\leq n$. For $X\in \mat$
$$|g_k(A+X)-g_k(A)|\leq \sum_{m=1}^k \binom{n}{k-m,m}\|A\|^{k-m}\|X\|^m.$$
\end{cor}

\textbf{Acknowledgement.}
This article is based on my talk at Indo-French Seminar on Matrix Information Geometries, funded by Indo-French Centre for the Promotion of Advanced Research. I am thankful to my supervisor Prof. Rajendra Bhatia and other participants of the Seminar for their useful comments and suggestions.

\end{document}